\documentclass{birkau}

\usepackage{amsmath,amssymb,url}
\usepackage{hyperref}
\usepackage[all,cmtip]{xy}
\usepackage{enumitem}
\usepackage{mathtools}
\input diagxy

\theoremstyle{plain}
 \newtheorem{thm}{Theorem}[section]
 \newtheorem{lem}[thm]{Lemma}
 \newtheorem{prop}[thm]{Proposition}
 \newtheorem{cor}[thm]{Corollary}
\theoremstyle{definition}
 
 \newtheorem{exmp}[thm]{Example}
 \newtheorem{rem}[thm]{Remark}

\DeclareMathAlphabet{\mathcal}{OMS}{cmsy}{m}{n}

\DeclareMathOperator{\ev}{ev}

\def\oto{{\bfig\morphism<180,0>[\mkern-4mu`\mkern-4mu;]\place(86,0)[\circ]\efig}}

\newcommand{\ra}{\rightarrow}

\newcommand{\lda}{\swarrow}
\newcommand{\rda}{\searrow}

\newcommand{\bv}{\bigvee}

\newcommand{\dv}{\dashv}

\newcommand{\nat}{\natural}

\renewcommand{\phi}{\varphi}
\newcommand{\al}{\alpha}
\newcommand{\be}{\beta}

\newcommand{\ga}{\gamma}

\newcommand{\lam}{\lambda}

\newcommand{\sC}{\mathsf{C}}
\newcommand{\sD}{\mathsf{D}}
\newcommand{\sE}{\mathsf{E}}

\newcommand{\sQ}{\mathsf{Q}}

\newcommand{\Fy}{\mathfrak{y}}

\newcommand{\Cat}{\mathbf{Cat}}

\newcommand{\Set}{\mathbf{Set}}
\newcommand{\CcSet}{\mathbf{CcSet}}

\newcommand{\QSet}{\sQ\text{-}\Set}

\newcommand{\RSet}{[0,1]_*\text{-}\Set}

\newcommand{\RSetM}{[0,1]_*\text{-}\underline{\Set}}
\newcommand{\RCcSetM}{[0,1]_*\text{-}\underline{\CcSet}}
\newcommand{\RDist}{[0,1]_*\text{-}\mathbf{Dist}}

\newcommand{\sCd}{\sC^{\dag}}

\newcommand{\Fyd}{\Fy^{\dag}}

\newcommand{\CdX}{\sCd X}

\renewcommand{\leq}{\leqslant}

\numberwithin{equation}{section}
\allowdisplaybreaks

\begin{document}

\title{Cartesian closedness of the category of real-valued sets, II}

\author[L. Shen]{Lili Shen}
\address{School of Mathematics, Sichuan University, Chengdu 610064, China}
\email{shenlili@scu.edu.cn}

\corrauthor[J. Zhang]{Jian Zhang}
\address{School of Mathematics, Sichuan University, Chengdu 610064, China}
\email{zhangjian.2025@qq.com}

\subjclass{03E72, 18D15, 18F75}

\keywords{Quantales, Quantaloids, Real-valued sets, Left-continuous t-norms, Cartesian closed categories}

\begin{abstract}
Let $[0,1]_*$ be the unit interval $[0,1]$ equipped with a left-continuous t-norm $*$. We prove that $[0,1]_*\text{-}\mathbf{Set}$ is cartesian closed if and only if $*$ is the minimum t-norm. This extends the classification for continuous t-norms established in the first part of this work to the whole left-continuous setting.
\end{abstract}

\maketitle

\section{Introduction}

Building on the theory of frame-valued sets initiated by Higgs \cite{Higgs1970,Higgs1984} and Fourman--Scott \cite{Fourman1979}, H\"ohle and his collaborators \cite{Hoehle1992,Hoehle1995a,Hoehle1995b,Hoehle2011a,Pu2012} developed the theory of quantale-valued sets, which associates with a unital involutive quantale $\sQ$ a category 
\[
\QSet
\]
of sets valued in $\sQ$. From the perspective of enriched category theory \cite{Stubbe2014}, these are symmetric categories enriched in the quantaloid $\sD\sQ$; their morphisms are left adjoint $\sD\sQ$-distributors. When $\sQ$ is a frame, $\QSet$ is a topos \cite{Fourman1979}. For a general quantale, however, $\QSet$ need not be a topos \cite{Hu2025}, and it can even fail to be cartesian closed \cite{Shen2026}.

In this paper, the table $[0,1]_*$ of truth-values is the unit interval equipped with a left-continuous t-norm. The first part of this work \cite{Shen2026} established that, when $*$ is continuous, $\RSet$ is cartesian closed if and only if $*$ is the minimum t-norm.

The left-continuous case cannot be treated as a formal extension of the continuous one. Indeed, Lai and Luo \cite{Lai2026a} show that, after continuity is dropped, the category $[0,1]_*\text{-}\Cat$ of categories enriched in the unital quantale $([0,1],*,1)$ can be cartesian closed for certain non-minimum left-continuous t-norms; see also the general quantaloid characterization of Stubbe and Yu \cite{Stubbe2026}. The category $[0,1]_*\text{-}\Cat$ is a global-enrichment setting, whereas $\RSet$ allows varying type data. This contrast is not explained merely by the availability of variable types: $\RSet$ has symmetric $\sD[0,1]_*$-categories with variable types as objects and left adjoint $\sD[0,1]_*$-distributors as morphisms, whereas $[0,1]_*\text{-}\Cat$ has $[0,1]_*$-categories and $[0,1]_*$-functors. We prove that $\RSet$ is cartesian closed only for the minimum t-norm; in particular, the non-minimum cases in the classification of Lai and Luo do not yield exceptions here.

Our proof differs substantially from the one in Part~I \cite{Shen2026}. Rather than using the structural description of continuous t-norms to select a local configuration and then deriving a full formula for the homs of a hypothetical exponential, we attach a canonical local obstruction to each non-idempotent $c\in[0,1]$. Namely, we consider the idempotent floor
\[
a_c=\bigvee\{e\leq c\mid e*e=e\}.
\]
The preservation of suprema by $*$ makes $a_c$ idempotent, and hence $a_c<c$. From the pair $a_c<c$ we construct two small variable-type test objects. If a suitable exponential existed, their Cauchy completions would force two lower bounds on homs in the exponential. Evaluation gives an incompatible upper bound. Thus the argument applies throughout the left-continuous setting without a full calculation of the homs of a hypothetical exponential.

The paper is organized as follows. Section \ref{real-valued-sets} recalls real-valued sets and their distributor presentation. In Section \ref{cartesian-closedness-of-real-valued-sets}, we first collect the required facts about Cauchy completion, then prove the canonical obstruction and the cartesian-closedness characterization, and finally compare it with the result of Lai and Luo for $[0,1]_*\text{-}\Cat$.

\section{Real-valued sets}
\label{real-valued-sets}

Throughout this paper, we consider $[0,1]_*$ as the table of truth-values, where $*$ is a left-continuous t-norm on the unit interval $[0,1]$. More generally, a binary operation $*$ on an interval $[a,b]$ is a \emph{left-continuous t-norm} \cite{Klement2000,Klement2004b,Alsina2006} if
\begin{itemize}
\item $([a,b],*,b)$ is a commutative monoid,
\item $p*\bigvee\limits_{i\in I}q_{i}=\bigvee\limits_{i\in I}(p*q_{i})$ and $(\bigvee\limits_{i\in I}p_{i})*q=\bigvee\limits_{i\in I}(p_{i}*q)$ for all $p,q,p_i,q_i\in[a,b]$ $(i\in I)$.
\end{itemize}
We write $[a,b]_*$ for the resulting t-norm structure.

Note that for each $p\in[a,b]$, there is a Galois connection
\[(p*-)\dv(p\ra -)\colon[a,b]\to[a,b]\]
satisfying
\[p*q\leq r\iff q\leq p\ra r\]
for all $p,q,r\in[a,b]$. 

Let $p\in[a,b]$. We say that $p$ is \emph{idempotent} if $p*p=p$.


\begin{exmp} \label{MLP-def}
The \emph{nilpotent minimum t-norm} $*_{\mathrm{NM}}$ on $[0,1]$ is a left-continuous but non-continuous t-norm, which is given by
\[
p*_{\mathrm{NM}}q=
\begin{cases}
0 & \text{if}\ p+q\leq1,\\
p\wedge q & \text{if}\ p+q>1.
\end{cases}
\]
Its residual is
\[
p\ra q=
\begin{cases}
1 & \text{if}\ p\leq q,\\
(1-p)\vee q & \text{if}\ p>q.
\end{cases}
\]
\end{exmp}

In fact,
\[([a,b],*,b)\]
is a commutative unital \emph{quantale} \cite{Rosenthal1990}. 

Given $q,u\in[a,b]$, we say that $u$ is \emph{divisible by $q$} if
\[u=q*(q\ra u).\]

\begin{prop} \label{divisible} 
In every left-continuous t-norm $[a,b]_*$,
\begin{equation}\label{divisible:u-v-q}
v*(q\ra u)=(q\ra v)*u
\end{equation}
whenever $u,v\in[a,b]$ are divisible by $q$.
\end{prop}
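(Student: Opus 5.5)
Substitute the divisibility identities $u=q*(q\ra u)$ and $v=q*(q\ra v)$ into both sides of \eqref{divisible:u-v-q}. Then use commutativity and associativity of $*$. Nothing beyond the monoid structure of $([a,b],*,b)$ is needed; neither left-continuity nor the residuation adjunction enters, except that $q\ra u$ and $q\ra v$ are defined.

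First rewrite the left-hand side using divisibility of $v$:
\[
v*(q\ra u)=\bigl(q*(q\ra v)\bigr)*(q\ra u)=(q\ra v)*\bigl(q*(q\ra u)\bigr).
\]
The second equality uses associativity and commutativity to move $q\ra v$ to the front and group $q$ with $q\ra u$. Now apply divisibility of $u$, which says $q*(q\ra u)=u$. This turns the last expression into $(q\ra v)*u$, which is exactly the right-hand side.

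Equivalently, both sides equal the symmetric expression $q*(q\ra u)*(q\ra v)$, and one could present the proof that way. There is no real obstacle. The only point to check is that divisibility is used in the correct direction, namely to replace $v$ on the left and to recognise $u$ on the right. Since this uses only the commutative monoid axioms, the identity in fact holds in every commutative quantale.
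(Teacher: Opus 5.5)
Your proof is correct and is exactly the paper's argument: substitute $v=q*(q\ra v)$, regroup using associativity and commutativity, and recognise $q*(q\ra u)=u$. Your remark is also accurate: only the commutative monoid structure is used, together with the existence of the residuals $q\ra u$ and $q\ra v$.
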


\begin{proof}
Since $u=q*(q\ra u)$ and $v=q*(q\ra v)$, associativity and commutativity give
\[
v*(q\ra u)=q*(q\ra v)*(q\ra u)=(q\ra v)*u.\qedhere
\]
\end{proof}

Following the definition of \emph{quantale-valued set} \cite{Hoehle1992,Hoehle1995b,Hoehle2011a,Pu2012,Lai2020}, by a \emph{$[0,1]_*$-set} (or a \emph{$[0,1]_*$-valued set}) we mean a set $X$ equipped with a map
\[\al\colon X\times X\to[0,1],\]
such that
\begin{enumerate}[label=(S\arabic*)]
\item \label{S1} $\al(x,y)=\al(x,x)*(\al(x,x)\ra\al(x,y))=\al(y,y)*(\al(y,y)\ra\al(x,y))$,
\item \label{S2} $\al(x,y)=\al(y,x)$,
\item \label{S3} $\al(y,z)*(\al(y,y)\ra\al(x,y))\leq\al(x,z)$
\end{enumerate}
for all $x,y,z\in X$. Here 
\[\al(x,y)\] 
is understood as the truth-value of the statement that \emph{$x$ is equal to $y$}. 

In particular, it necessarily holds that
\begin{equation}\label{al(x,y) leq al(x,x) wedge al(y,y)}
\al(x,y)\leq\al(x,x)\wedge\al(y,y).
\end{equation}
For $x\in X$, we call the value $\al(x,x)$ the \emph{type}, or \emph{extent}, of $x$, which may be understood as the \emph{extent of existence} of $x$.

Let $(X,\al)$ be a $[0,1]_*$-set. For $x,y\in X$, we write $x\cong y$ if
\begin{equation} \label{x-cong-y-def}
\al(x,x)=\al(y,y)=\al(x,y),
\end{equation}
and we say that $(X,\al)$ is \emph{separated} if 
\[x\cong y\iff x=y.\]
Moreover, we denote by
\begin{equation} \label{X_q-def}
X_q=\{x\in X\mid\al(x,x)=q\}
\end{equation}
the slice of $X$ consisting of elements of type $q$.

A \emph{distributor} 
\[\phi\colon (X,\al)\oto (Y,\be)\]
of $[0,1]_*$-sets is a function
\[\phi\colon X\times Y\to[0,1]\]
such that
\begin{enumerate}[label=(M\arabic*)]
\item \label{M1} $\phi(x,y)=\al(x,x)*(\al(x,x)\ra\phi(x,y))=\be(y,y)*(\be(y,y)\ra\phi(x,y))$,
\item \label{M2} $(\be(y,y)\ra\be(y,y'))*\phi(x,y)*(\al(x,x)\ra\al(x',x))\leq\phi(x',y')$
\end{enumerate}
for all $x,x'\in X$, $y,y'\in Y$; it necessarily holds that
\begin{equation}\label{phi(x,y) leq al(x,x) wedge be(y,y)}
\phi(x,y)\leq\al(x,x)\wedge\be(y,y).
\end{equation}
The \emph{opposite} of $\phi$ is the distributor
\[\phi^{\circ}\colon(Y,\be)\oto(X,\al),\quad \phi^{\circ}(y,x)=\phi(x,y).\]
Its composite
\[\psi\circ\phi\colon(X,\al)\oto(Z,\ga)\]
with another distributor $\psi\colon(Y,\be)\oto(Z,\ga)$ is given by
\begin{equation} \label{psi-circ-phi-def}
(\psi\circ\phi)(x,z)=\bv_{y\in Y}\psi(y,z)*(\be(y,y)\ra\phi(x,y)),
\end{equation}
with
\begin{equation} \label{al-id}
\al\colon(X,\al)\oto(X,\al)
\end{equation}
serving as the identity distributor for this composition. The category of $[0,1]_*$-sets and their distributors is denoted by
\[\RDist.\]
$\RDist$ is a \emph{quantaloid} \cite{Rosenthal1996,Stubbe2005}; that is, each hom-set is a complete lattice, the order on distributors is pointwise, and the composition preserves suprema in each variable, i.e.,
\[\psi\circ\Big(\bigvee_{i\in I} \phi_i\Big)=\bigvee_{i\in I}\psi\circ \phi_i\quad\text{and}\quad\Big(\bigvee_{i\in I}\psi_i\Big)\circ \phi=\bigvee_{i\in I}\psi_i\circ \phi\]
for all distributors $\phi,\phi_i\colon (X,\al)\oto(Y,\be)$, $\psi,\psi_i\colon(Y,\be)\oto (Z,\ga)$ $(i\in I)$. The corresponding right adjoints induced by the composition maps
\[(-\circ \phi)\dashv(-\lda \phi)\colon\RDist(X,Z)\to\RDist(Y,Z)\]
and
\[(\psi\circ -)\dashv(\psi\rda -)\colon\RDist(X,Z)\to\RDist(X,Y)\]
satisfy
\[\psi\circ \phi\leq\xi\iff \psi\leq\xi\lda \phi\iff \phi\leq \psi\rda\xi\]
for any $\phi\colon (X,\al)\oto (Y,\be)$, $\psi\colon (Y,\be)\oto (Z,\gamma)$, $\xi\colon (X,\al)\oto (Z,\gamma)$. 

A \emph{morphism} 
\[\phi\colon(X,\al)\oto(Y,\be)\] 
of $[0,1]_*$-sets is a left adjoint distributor, whose right adjoint is necessarily its opposite (see \cite[Proposition 3.5.3]{Heymans2010} and \cite[Theorem 3.7]{Heymans2011}); equivalently, it is a distributor satisfying
\begin{enumerate}[label=(M\arabic*)]
\setcounter{enumi}{2}
\item \label{M3} $\phi\circ\phi^{\circ}\leq\be$,
\item \label{M4} $\al\leq\phi^{\circ}\circ\phi$.
\end{enumerate}
The category of $[0,1]_*$-sets and their morphisms is denoted by
\[\RSet.\]

We may also consider a \emph{monotone function}
\[f\colon(X,\al)\to(Y,\be),\]
which is a map $f\colon X\to Y$ such that
\begin{equation} \label{monotone-function-def}
\al(x,x)=\be(fx,fx)\quad\text{and}\quad\al(x,x')\leq\be(fx,fx')
\end{equation}
for all $x,x'\in X$. The category of $[0,1]_*$-sets and monotone functions is denoted by
\[\RSetM.\]

\begin{rem}
Every left-continuous t-norm $[0,1]_*$ gives rise to a quantaloid $\sD[0,1]_*$ \cite{Hoehle2011,Pu2012,Stubbe2014,Lai2020}. Its objects are the elements of $[0,1]$, and, for $p,q\in[0,1]$,
\[\sD[0,1]_*(p,q)=\{u\in[0,1]\mid u\text{ is divisible by both }p\text{ and }q\}.\]
For $u\in\sD[0,1]_*(p,q)$ and $v\in\sD[0,1]_*(q,r)$, the composite is $v\circ u=v*(q\ra u)$; the identity $\sD[0,1]_*$-arrow of $q$ is $q$ itself, and each hom-set is ordered as a subset of $[0,1]$.

The $[0,1]_*$-sets and distributors defined above are precisely symmetric $\sD[0,1]_*$-categories and $\sD[0,1]_*$-distributors, respectively. Accordingly, a morphism is precisely a left adjoint $\sD[0,1]_*$-distributor, and a monotone function is precisely a $\sD[0,1]_*$-functor. Thus, $\RSet$ is the category of symmetric $\sD[0,1]_*$-categories and left adjoint $\sD[0,1]_*$-distributors, while $\RSetM$ is the category of symmetric $\sD[0,1]_*$-categories and $\sD[0,1]_*$-functors.
\end{rem}

Every monotone function $f\colon(X,\al)\to(Y,\be)$ induces a morphism 
\begin{equation} \label{f-graph-def}
f_{\nat}\colon(X,\al)\oto(Y,\be),\quad f_{\nat}(x,y)=\be(fx,y)
\end{equation}
of $[0,1]_*$-sets, called the \emph{graph} of $f$, whose opposite
\[f^{\nat}\colon(Y,\be)\oto(X,\al),\quad f^{\nat}(y,x)=\be(y,fx),\]
is called the \emph{cograph} of $f$. Obviously, for every $[0,1]_*$-set $(X,\al)$, the identity function $1_X$ is monotone, and
\[\al=(1_X)_{\nat}=1_X^{\nat}.\]
Hence, in order to simplify the notation, we abbreviate a $[0,1]_*$-set $(X,\al)$ to $X$, and write $1_X^{\nat}(x,y)$ instead of $\al(x,y)$ if no confusion arises. 

For each $q\in[0,1]$, we have a one-element $[0,1]_*$-set $\{q\}$ with 
\[1_{\{q\}}^{\nat}(q,q)=q.\]

For each distributor $\phi\colon X\oto Y$,
\[\phi(x,-)\colon\{1_X^{\nat}(x,x)\}\oto Y\]
and
\[\phi(-,y)\colon X\oto \{1_Y^{\nat}(y,y)\}\]
are distributors for all $x\in X$, $y\in Y$. In particular, $\phi(x,y)$ can be considered as a distributor from $\{1_X^{\nat}(x,x)\}$ to $\{1_Y^{\nat}(y,y)\}$, so \ref{M2} can be written as
\begin{enumerate}[label=(M\arabic*$^\prime$)]
\setcounter{enumi}{1}
\item \label{M2'} $1_Y^{\nat}(y,y')\circ\phi(x,y)\circ 1_X^{\nat}(x',x)\leq\phi(x',y')$,
\end{enumerate}
\ref{S3} can be written as
\begin{enumerate}[label=(S\arabic*$^\prime$)]
\setcounter{enumi}{2}
\item \label{S3'} $1_X^{\nat}(y,z)\circ 1_X^{\nat}(x,y)\leq 1_X^{\nat}(x,z)$,
\end{enumerate}
and the composite distributor \eqref{psi-circ-phi-def} can be written as
\begin{equation} \label{psi-circ-phi-def'}
(\psi\circ\phi)(x,z)=\bv_{y\in Y}\psi(y,z)\circ\phi(x,y).
\end{equation}

\begin{lem} \label{phi-p-q}
For $p,q\in[0,1]$, the following statements are equivalent:
\begin{enumerate}[label={\rm(\roman*)}]
\item \label{phi-p-q:phi} There exists a morphism $\phi\colon\{p\}\oto\{q\}$ of one-element $[0,1]_*$-sets.
\item \label{phi-p-q:p} $p$ is divisible by $q$ and satisfies $p=p*(q\ra p)$.
\end{enumerate}
In this case, it necessarily holds that $\phi(p,q)=p$. Therefore, a morphism between one-element $[0,1]_*$-sets, when it exists, will be denoted by
\[\overline{p}\colon\{p\}\oto\{q\}.\]
When the codomain needs to be indicated, we write $\overline{p}_q$ instead of $\overline{p}$.
\end{lem}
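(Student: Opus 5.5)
The plan is to unwind the definitions directly, since a distributor $\phi\colon\{p\}\oto\{q\}$ is nothing but a single value $u=\phi(p,q)\in[0,1]$. I would first translate each of the conditions \ref{M1}--\ref{M4} into a condition on $u$. Then I would show that \ref{M4} forces $u=p$, and read off \ref{phi-p-q:p} from the remaining conditions. The converse is then a check that $u=p$ satisfies everything.

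First the distributor conditions. \ref{M1} says exactly that $u=p*(p\ra u)=q*(q\ra u)$, i.e.\ $u$ is divisible by both $p$ and $q$. \ref{M2} reads $(q\ra q)*u*(p\ra p)\leq u$. Since $q\ra q=p\ra p=1$, this holds trivially. By \eqref{phi(x,y) leq al(x,x) wedge be(y,y)} we also have $u\leq p\wedge q$.

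Next I compute the two composites using \eqref{psi-circ-phi-def}, where the sum runs over a single index:
\[
(\phi^{\circ}\circ\phi)(p,p)=\phi^{\circ}(q,p)*(q\ra\phi(p,q))=u*(q\ra u),\qquad
(\phi\circ\phi^{\circ})(q,q)=u*(p\ra u).
\]
\ref{M3} requires $u*(p\ra u)\leq q$. This is automatic, because $u*(p\ra u)\leq u\leq q$. \ref{M4} requires $p\leq u*(q\ra u)$. Combined with $u*(q\ra u)\leq u\leq p$, this gives $u=p$ and $p=p*(q\ra p)$. This is the key (and only slightly nontrivial) step: the unit condition \ref{M4} pins down $\phi(p,q)=p$. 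Substituting $u=p$ into \ref{M1}, divisibility by $p$ is trivial ($p*(p\ra p)=p*1$), and divisibility by $q$ says exactly that $p$ is divisible by $q$. So \ref{phi-p-q:phi} implies \ref{phi-p-q:p}, together with the claim $\phi(p,q)=p$.

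Conversely, assuming \ref{phi-p-q:p}, I define $\phi(p,q)=p$ and run the same list backwards. \ref{M1} holds by divisibility of $p$ by $q$ and by $p$ itself. \ref{M2} and \ref{M3} are automatic as above, using $p\leq q$, which follows from $p=q*(q\ra p)\leq q$. Finally, \ref{M4} is the equality $p=p*(q\ra p)$. I expect no real obstacle. The only points requiring care are getting the correct residuals in the composite formula, namely $\be(y,y)\ra\phi(x,y)$ with the middle object's type, and noticing the sandwich $p\leq u*(q\ra u)\leq u\leq p$.
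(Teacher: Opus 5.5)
Your proposal is correct and follows essentially the same route as the paper. In both, (M4) together with $u\leq p\wedge q$ gives the sandwich $p\leq u*(q\ra u)\leq u\leq p$, which forces $\phi(p,q)=p$ and $p=p*(q\ra p)$, with divisibility by $q$ read off from (M1); the converse is the same direct check of (M1)--(M4) for $\phi(p,q)=p$, where your explicit one-term composites and the remark that $p\leq q$ follows from divisibility (needed for (M3)) just spell out what the paper leaves implicit.
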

\begin{proof}
\ref{phi-p-q:phi}$\implies$\ref{phi-p-q:p}: Suppose that $\phi(p,q)=r$. Then, by \ref{M1} and \ref{M4} we have
\begin{equation} \label{phi-p-q:M134}
r\leq p\wedge q\quad\text{and}\quad p\leq r*(q\ra r).
\end{equation}
Thus
\[p\leq r*(q\ra r)\leq r\leq p,\] 
and consequently $r=p$. By \ref{M1},
\[p=r=q*(q\ra r)=q*(q\ra p),\]
so $p$ is divisible by $q$. Moreover,
\[p\leq r*(q\ra r)=p*(q\ra p)\leq p;\]
hence
\begin{equation} \label{p=p*(q-ra-p)}
p=p*(q\ra p).
\end{equation}
\ref{phi-p-q:p}$\implies$\ref{phi-p-q:phi}: In this case, it is easy to see that $\phi(p,q)=p$ satisfies \ref{M1}--\ref{M4}:
\begin{itemize}
\item \ref{M1} $p=p*(p\ra p)=q*(q\ra p)$.
\item \ref{M2} $(q\ra q)*p*(p\ra p)\leq p$.
\item \ref{M3} $p*(p\ra p)\leq q$.
\item \ref{M4} $p\leq p*(q\ra p)$.
\end{itemize}
Therefore, $\phi\colon\{p\}\oto\{q\}$ is a morphism of one-element $[0,1]_*$-sets.
\end{proof} 

\section{Cartesian closedness of the category of real-valued sets}
\label{cartesian-closedness-of-real-valued-sets}

A \emph{singleton} on a $[0,1]_*$-set $X$ is a morphism $\lam\colon\{p\}\oto X$ whose domain is a one-element $[0,1]_*$-set. A $[0,1]_*$-set is \emph{Cauchy complete} precisely when every singleton is represented by an element.

\begin{prop} \label{Cauchy-complete-def} (See \cite[Proposition 7.1]{Stubbe2005}.)
A $[0,1]_*$-set $X$ is Cauchy complete if and only if for each singleton $\lam\colon\{p\}\oto X$, there exists $x\in X$ such that
\[\lam=1_X^{\nat}(x,-).\]
In this case, necessarily $1_X^{\nat}(x,x)=p$.
\end{prop}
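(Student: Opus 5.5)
The plan is to specialise the general criterion of \cite[Proposition 7.1]{Stubbe2005} to the present setting. A singleton $\lam\colon\{p\}\oto X$ is by definition a morphism, i.e.\ a left adjoint distributor from a one-element $[0,1]_*$-set. Cauchy completeness of a $\sD[0,1]_*$-category says that every such left adjoint is \emph{representable}: it is the graph of a functor from $\{p\}$. So I would first identify the representable distributors $\{p\}\oto X$. A monotone function $f\colon\{p\}\to X$ is just a choice of $x\in X$ with $1_X^{\nat}(x,x)=p$, by \eqref{monotone-function-def}; the inequality condition there reads $p\leq 1_X^{\nat}(x,x)$ and is automatic. Its graph \eqref{f-graph-def} is $f_{\nat}(p,-)=1_X^{\nat}(x,-)$. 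Hence ``$\lam$ is represented by an element'' means exactly $\lam=1_X^{\nat}(x,-)$ for some $x$ of type $p$, which is the claimed characterisation once the last sentence of the statement is established.

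For the converse direction I would check directly that $1_X^{\nat}(x,-)\colon\{q\}\oto X$, with $q=1_X^{\nat}(x,x)$, is a morphism. Condition \ref{M1} follows from \ref{S1}. Condition \ref{M2} is \ref{S3'} together with the symmetry \ref{S2}. For \ref{M4}, the composite $(\lam^{\circ}\circ\lam)(q,q)$ is a join over $y\in X$, and the term $y=x$ already gives $q\circ q=q$. For \ref{M3}, we have $(\lam\circ\lam^{\circ})(y,y')=1_X^{\nat}(x,y')\circ 1_X^{\nat}(y,x)\leq 1_X^{\nat}(y,y')$, again by \ref{S3'}.

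It remains to prove the final assertion: if $\lam=1_X^{\nat}(x,-)$, then $1_X^{\nat}(x,x)=p$. Evaluating at $x$ gives $\lam(p,x)=1_X^{\nat}(x,x)=:q$. By \eqref{phi(x,y) leq al(x,x) wedge be(y,y)} we get $q\leq p$. For the reverse inequality I would use \ref{M4}:
\[p\leq\bv_{y}\lam(p,y)\circ\lam(p,y)=\bv_y 1_X^{\nat}(x,y)*(1_X^{\nat}(y,y)\ra 1_X^{\nat}(x,y))\leq\bv_y 1_X^{\nat}(x,y)\leq q,\]
where the last step uses \eqref{al(x,y) leq al(x,x) wedge al(y,y)}. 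Hence $p=q$.

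The main obstacle is only the bookkeeping of the types in the one-element domain. In particular, the domain type $p$ is forced by \ref{M4} rather than being part of the data of $x$, and this is exactly what the final computation handles.
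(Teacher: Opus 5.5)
Your proof is correct, but it does not follow the paper's route. The paper gives no argument of its own and simply cites Stubbe's result for general quantaloid-enriched categories; you instead verify the statement directly in the untyped $[0,1]_*$ presentation.

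Once the singleton formulation stated just before the proposition is taken as the definition, the only real content is the type assertion, and you locate it correctly in (M4). This step is genuinely needed. For any $x$ with $1_X^{\nat}(x,x)<1$, the function $\lam(1,y)=1_X^{\nat}(x,y)$ still satisfies (M1) and (M2), so it is a distributor $\{1\}\oto X$, but it fails (M4). In Stubbe's typed setting this point never arises: $\sD[0,1]_*$-arrows carry their domain and codomain, so an equation between $\lam\colon\{p\}\oto X$ and $1_X^{\nat}(x,-)$ already presupposes $p=1_X^{\nat}(x,x)$. It becomes a lemma only because the paper treats distributors as plain $[0,1]$-valued functions, and your computation makes explicit what the citation leaves implicit.

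Conversely, the citation supplies something your argument does not address: the connection with the general notion of Cauchy completeness, in which every left adjoint distributor into $X$, from an arbitrary domain, must be represented by a functor. This is not needed for the statement as phrased here. It is, however, what matches the paper's singleton definition with the notion behind the later appeals to Stubbe's results on $\sCd X$.

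Finally, your check that $1_X^{\nat}(x,-)$ is a morphism is correct but unnecessary for the equivalence, since $\lam$ is assumed to be a singleton from the outset. It is just a special case of the paper's remark that graphs of monotone functions are morphisms.
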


For every $[0,1]_*$-set $X$, let
\[\CdX\coloneqq\bigcup_{q\in[0,1]}\RSet(\{q\},X)\]
be the set of all singletons on $X$. Its hom is
\begin{equation} \label{CdX-hom}
1_{\CdX}^{\nat}(\lam,\lam')={\lam'}^{\circ}\circ\lam.
\end{equation}
The $[0,1]_*$-set $\CdX$ is separated and Cauchy complete \cite[Proposition 7.12]{Stubbe2005}; it is called the \emph{Cauchy completion} of $X$. For a singleton $\lam\colon\{q\}\oto X$, one has
\begin{equation} \label{CdX-type}
1_{\CdX}^{\nat}(\lam,\lam)=q.
\end{equation}
The canonical monotone function
\begin{equation} \label{FYd-def}
\Fyd_X\colon X\to\CdX,\quad \Fyd_X x=1_X^{\nat}(x,-)
\end{equation}
is the unit of the Cauchy-completion reflection.

The following two elementary descriptions are obtained by the same formal calculations as in \cite[Examples 3.3 and 3.4]{Shen2026}. The difference is that Lemma \ref{phi-p-q} replaces the continuous-case description of morphisms between one-element $[0,1]_*$-sets:

\begin{exmp} \label{Cdq-exmp}
By Lemma \ref{phi-p-q}, the Cauchy completion of the one-element $[0,1]_*$-set $\{q\}$ is
\begin{equation} \label{Cdq}
\sCd\{q\}=\{\overline{p}\mid p\text{ is divisible by }q\text{ and }p=p*(q\ra p)\}.
\end{equation}
For $\overline{p},\overline{p'}\in\sCd\{q\}$,
\begin{equation} \label{Cdp-hom}
1_{\sCd\{q\}}^{\nat}(\overline{p},\overline{p'})=p'*(q\ra p).
\end{equation}
\end{exmp}

\begin{exmp} \label{Cdq-x-y-exmp}
Let $X=\{x,y\}$ be a two-element $[0,1]_*$-set. Every singleton on $X$ has a presentation
\[
1_X^{\nat}(z,-)\circ\overline{p}\colon\{p\}\oto X,
\]
where $z\in\{x,y\}$ and $\overline{p}\colon\{p\}\oto\{1_X^{\nat}(z,z)\}$ is a morphism. Hence
\[
\sCd X=\{1_X^{\nat}(z,-)\circ\overline{p}\mid z\in\{x,y\},\ \overline{p}\colon\{p\}\oto\{1_X^{\nat}(z,z)\}\}.
\]
\end{exmp}

Let $\RCcSetM$ be the full subcategory of $\RSetM$ consisting of separated Cauchy complete $[0,1]_*$-sets. It is reflective in $\RSetM$, with reflector $\sCd$ (see \cite[Proposition 7.14]{Stubbe2005}). The following standard equivalence can be proved by the same argument as \cite[Proposition 5.7(2)]{Pu2012}; the argument remains valid for left-continuous t-norms in the present setting (cf. \cite{Stubbe2005,Hoehle2011a}). It allows us to study the cartesian closedness of $\RSet$ in $\RCcSetM$.

\begin{prop} \label{RSet-RCcSetM}
The category $\RSet$ is equivalent to $\RCcSetM$.
\end{prop}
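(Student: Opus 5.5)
We will realize the equivalence as a restriction of the comparison between functors and left adjoint distributors. Consider the functor $(-)_{\nat}\colon\RCcSetM\to\RSet$ that is the identity on objects and sends a monotone function $f$ to its graph $f_{\nat}$ from \eqref{f-graph-def}. Functoriality is a routine check. First, $(1_X)_{\nat}=1_X^{\nat}$ is the identity distributor \eqref{al-id}. Second, $(gf)_{\nat}=g_{\nat}\circ f_{\nat}$: the composite \eqref{psi-circ-phi-def'} evaluates to $\bv_y 1_Z^{\nat}(gy,z)\circ 1_Y^{\nat}(fx,y)$, which equals $1_Z^{\nat}(gfx,z)$. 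The inequality $\geq$ comes from taking $y=fx$ and using \ref{M1}; the inequality $\leq$ comes from \ref{M2'} applied to the graph of $g$. We will show this functor is full, faithful and essentially surjective.

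\emph{Essential surjectivity.} For any $X$, the unit $\Fyd_X\colon X\to\CdX$ induces morphisms $(\Fyd_X)_{\nat}$ and $(\Fyd_X)^{\nat}$. We will show they are mutually inverse in $\RSet$. By the Yoneda-type identity in $\CdX$, namely $1_{\CdX}^{\nat}(\Fyd_X x,\lam)=\lam^{\circ}\circ 1_X^{\nat}(x,-)=\lam(x)$, the graph $(\Fyd_X)_{\nat}$ is simply $(x,\lam)\mapsto\lam(x)$. Hence the composite $(\Fyd_X)^{\nat}\circ(\Fyd_X)_{\nat}$ evaluates to $\bv_\lam \lam(x')\circ\lam(x)$. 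This is bounded above by $1_X^{\nat}$ via \ref{M3} for each singleton $\lam$, and attains $1_X^{\nat}$ at $\lam=\Fyd_X x$. For the other composite, the value at $(\lam,\lam')$ is $\bv_x \lam'(x)\circ\lam^{\circ}(x)=\lam'^{\circ}\circ\lam$, which is exactly the hom \eqref{CdX-hom}. Since $\CdX$ is separated and Cauchy complete, every object of $\RSet$ is isomorphic to one in the image.

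\emph{Full faithfulness.} Let $X,Y$ be separated and Cauchy complete, and let $\phi\colon X\oto Y$ be a morphism. For each $x$, the distributor $\phi(x,-)\colon\{1_X^{\nat}(x,x)\}\oto Y$ is again a left adjoint: it is the composite $\phi\circ 1_X^{\nat}(-,x)$, where $1_X^{\nat}(-,x)$ is the singleton $\Fyd_X x$ viewed as a morphism. So $\phi(x,-)$ is a singleton of type $1_X^{\nat}(x,x)$. By Proposition \ref{Cauchy-complete-def}, it is represented by some $fx\in Y$ with $1_Y^{\nat}(fx,fx)=1_X^{\nat}(x,x)$, and $fx$ is unique by separatedness. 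Monotonicity of $f$ follows from \ref{M4}: $1_X^{\nat}(x,x')\leq\bv_y\phi(x',y)\circ\phi(x,y)=\bv_y 1_Y^{\nat}(fx',y)\circ 1_Y^{\nat}(fx,y)=1_Y^{\nat}(fx,fx')$, using \ref{S3'} together with the choice $y=fx$. By construction $f_{\nat}=\phi$, which gives fullness. Faithfulness holds because $f_{\nat}=g_{\nat}$ implies $1_Y^{\nat}(fx,-)=1_Y^{\nat}(gx,-)$, hence $fx\cong gx$, and separatedness gives $fx=gx$.

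The main obstacle is the verification that $1_{\CdX}^{\nat}(\Fyd_X x,\lam)=\lam(x)$, together with the associated divisibility bookkeeping. In the variable-type setting, every composite involves $q\ra(-)$, and identities like $\lam(x)=1_X^{\nat}(x,x)*(1_X^{\nat}(x,x)\ra\lam(x))$ must be invoked via \ref{M1}, with Proposition \ref{divisible} used to commute factors. I would carry this out carefully using $\lam^{\circ}\circ\lam\leq q$ and $1_X^{\nat}\leq\lam\circ\lam^{\circ}$. Only the quantaloid structure of $\RDist$ is needed, with no continuity of $*$, which is why the argument of \cite{Pu2012} transfers unchanged.
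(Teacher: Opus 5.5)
Your proof is correct and is the standard argument the paper defers to \cite{Pu2012,Stubbe2005}: $f\mapsto f_{\nat}$ is fully faithful on separated Cauchy complete objects, and $(\Fyd_X)_{\nat}$ is an isomorphism $X\cong\CdX$ in $\RSet$, whose inverse $(\Fyd_X)^{\nat}$ is a left adjoint because it is an inverse. One slip in your last paragraph: the identity $1_{\CdX}^{\nat}(\Fyd_X x,\lam)=\lam(x)$ is just the identity law $\lam^{\circ}\circ1_X^{\nat}=\lam^{\circ}$ evaluated at $x$, and the inequality $1_X^{\nat}\leq\lam\circ\lam^{\circ}$ you propose to use there is false in general---the counit goes the other way, $\lam\circ\lam^{\circ}\leq1_X^{\nat}$, which is \ref{M3}, as you correctly use in the body.
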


Recall that an object $Y$ of a category with finite products is \emph{exponentiable} if the functor $-\times Y$ has a right adjoint $(-)^Y$. Thus, if $Z^Y$ exists, there is an evaluation
\[\ev_Z\colon Z^Y\times Y\to Z\]
such that every monotone function $K\colon A\times Y\to Z$ has a unique transpose
\[\sE K\colon A\to Z^Y\]
with $\ev_Z\circ(\sE K\times1_Y)=K$.

The category $\RCcSetM$ has finite products. For $A,B\in\RCcSetM$,
\begin{equation} \label{X-times-Y-def}
A\times B=\{(x,y)\mid x\in A,\ y\in B,\ 1_A^{\nat}(x,x)=1_B^{\nat}(y,y)\}
\end{equation}
and
\[
1_{A\times B}^{\nat}\left((x,y),(x',y')\right)=1_A^{\nat}(x,x')\wedge1_B^{\nat}(y,y').
\]

We first isolate the only consequence of the exponential adjunction that will be used below.

\begin{lem} \label{Z^Y-hom}
Let $B$ be a $[0,1]_*$-set, let $Y,Z\in\RCcSetM$, and suppose that $Z^Y$ exists in $\RCcSetM$. For every monotone function
\[
K\colon\sCd B\times Y\to Z,
\]
put $k=\sE K\colon\sCd B\to Z^Y$. Then
\[
1_B^{\nat}(b,b')\leq1_{Z^Y}^{\nat}\bigl(k(\Fyd_Bb),k(\Fyd_Bb')\bigr)
\]
for all $b,b'\in B$.
\end{lem}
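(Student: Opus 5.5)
The plan is to observe that $k=\sE K$ is, by definition of the exponential adjunction in $\RCcSetM$, a morphism of $\RCcSetM$, i.e.\ a monotone function $\sCd B\to Z^Y$. The inequality then follows from two monotone functions in a row. I expect no hard step; the argument only uses the ambient category and the properties of $\Fyd_B$.

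First I check that $\Fyd_B\colon B\to\sCd B$ is a monotone function with equalities on homs. By \eqref{CdX-hom}, $1^{\nat}_{\sCd B}(\Fyd_Bb,\Fyd_Bb')=1_B^{\nat}(b',-)^{\circ}\circ 1_B^{\nat}(b,-)$. Evaluated on the one-element sets, this is $\bigvee_{y\in B}1_B^{\nat}(y,b')\circ 1_B^{\nat}(b,y)$. This supremum is at least $1_B^{\nat}(b,b')\circ 1_B^{\nat}(b,b)=1_B^{\nat}(b,b')$ (take $y=b$ and use \ref{M1}/\ref{S1}). It is at most $1_B^{\nat}(b,b')$ by \ref{S3'}. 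Thus $1_B^{\nat}(b,b')=1^{\nat}_{\sCd B}(\Fyd_Bb,\Fyd_Bb')$.

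Strictly, only the inequality $\leq$ is needed, and it is already the monotonicity condition \eqref{monotone-function-def} for the unit $\Fyd_B$ stated after \eqref{FYd-def}.

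Second, since $\sCd B\in\RCcSetM$ and $K$ is a morphism of $\RCcSetM$ out of $\sCd B\times Y$, the universal property of $Z^Y$ yields $k=\sE K$ as a morphism $\sCd B\to Z^Y$ of $\RCcSetM$, hence a monotone function. Applying \eqref{monotone-function-def} to $k$ at the elements $\Fyd_Bb,\Fyd_Bb'$ gives
\[
1^{\nat}_{\sCd B}(\Fyd_Bb,\Fyd_Bb')\leq 1_{Z^Y}^{\nat}\bigl(k(\Fyd_Bb),k(\Fyd_Bb')\bigr).
\]
Chaining this with the first step yields the claim. The composite $k\circ\Fyd_B$ is a monotone function $B\to Z^Y$, and the lemma is exactly its monotonicity.

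The only point needing care is that $\sCd B$ really lies in $\RCcSetM$, so that $\sE$ applies. This holds because $\sCd B$ is separated and Cauchy complete, as recalled from \cite[Proposition 7.12]{Stubbe2005}.
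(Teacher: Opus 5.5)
Your proposal is correct and follows the paper's proof: the paper likewise observes that $k\circ\Fyd_B\colon B\to Z^Y$ is a composite of monotone functions and reads off the inequality from \eqref{monotone-function-def}. Your extra check that $\Fyd_B$ preserves homs with equality is valid, but as you note yourself, only the inequality is needed.
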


\begin{proof}
The composite $k\circ\Fyd_B\colon B\to Z^Y$ is monotone. The assertion follows immediately from \eqref{monotone-function-def}.
\end{proof}

\begin{lem} \label{sup-idem}
For $c\in[0,1]$, put
\[
a_c=\bigvee\{e\leq c\mid e*e=e\}.
\]
Then $a_c$ is idempotent. Consequently, if $c$ is non-idempotent, then $a_c<c$, and every idempotent element below $c$ is at most $a_c$.
\end{lem}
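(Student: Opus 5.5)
Let $E_c=\{e\leq c\mid e*e=e\}$, so that $a_c=\bigvee E_c$; note $E_c$ is nonempty because $0*0=0$. I will show $a_c*a_c=a_c$ by proving the two inequalities separately, using only the monotonicity of $*$, the fact that $1$ is the unit, and the join-preservation axiom of a left-continuous t-norm.

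First, $a_c*a_c\leq a_c*1=a_c$, since $*$ is monotone in each variable. Monotonicity follows from join-preservation, because $p\leq q$ gives $p*r\leq(p\vee q)*r=q*r$.

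For the reverse inequality, I distribute the join in both variables:
\[
a_c*a_c=\Bigl(\bigvee_{e\in E_c}e\Bigr)*\Bigl(\bigvee_{e'\in E_c}e'\Bigr)=\bigvee_{e,e'\in E_c}e*e'.
\]
Restricting this join to the diagonal terms $e=e'$ gives
\[
a_c*a_c\geq\bigvee_{e\in E_c}e*e=\bigvee_{e\in E_c}e=a_c.
\]
Hence $a_c*a_c=a_c$. The only point that needs care is this step. Here I use that $*$ preserves arbitrary joins in each variable separately, applied twice, to pass to the double join; restricting to the diagonal then avoids any need to control the off-diagonal terms $e*e'$. Left-continuity suffices for this, and no continuity from the right is needed.

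For the consequences, every element of $E_c$ is $\leq c$, so $a_c\leq c$. If $c$ is non-idempotent, then $a_c\neq c$, because $a_c$ is idempotent; therefore $a_c<c$. Finally, any idempotent $e\leq c$ lies in $E_c$, and so $e\leq\bigvee E_c=a_c$.
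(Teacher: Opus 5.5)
Your proof is correct and follows the paper's argument. Both expand $a_c*a_c$ as the double join $\bigvee_{e,e'\in E_c}e*e'$, use the diagonal terms $e*e=e$ to get $a_c\leq a_c*a_c$, and bound from above via $p*q\leq p$ (you write $a_c*a_c\leq a_c*1$, the paper bounds termwise by $e*e'\leq e$). As a side remark, the lower bound does not actually need join-preservation: $e\leq a_c$ already gives $e=e*e\leq a_c*a_c$ by monotonicity alone, so the lemma only uses monotonicity of $*$.
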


\begin{proof}
Let $E_c=\{e\leq c\mid e*e=e\}$. Since $*$ preserves suprema in each variable,
\[
a_c*a_c=\bigvee_{e,e'\in E_c}e*e'.
\]
For $e,e'\in E_c$, one has $e*e'\leq e\leq a_c$, whereas the diagonal terms give $e=e*e\leq a_c*a_c$. Hence $a_c*a_c=a_c$. If $a_c=c$, then $c$ is idempotent, so a non-idempotent $c$ satisfies $a_c<c$. The final assertion is immediate from the definition of $a_c$.
\end{proof}

\begin{prop} \label{RCcSetM-cc}
The category $\RCcSetM$ is cartesian closed if and only if $*$ is the minimum t-norm on $[0,1]$.
\end{prop}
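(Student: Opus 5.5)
The proof splits into the two implications; the substantial one is necessity.

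\emph{Sufficiency.} If $*$ is the minimum t-norm, then $([0,1],\wedge,1)$ is a frame. By Fourman--Scott \cite{Fourman1979}, $[0,1]_*$-$\mathbf{Set}$ is then a topos, hence cartesian closed. Proposition \ref{RSet-RCcSetM} transfers cartesian closedness to $\RCcSetM$.

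\emph{Necessity.} Suppose $*$ is not the minimum. A t-norm in which every element is idempotent satisfies $p*q\geq (p\wedge q)*(p\wedge q)=p\wedge q$, so it is the minimum. Hence some $c\in[0,1]$ is non-idempotent. Put $a=a_c$. By Lemma \ref{sup-idem}, $a$ is idempotent and $a<c$. Since $a\leq c$ and $c*a\leq a$, we have $a\leq c\ra a$, so
\[
a=a*a\leq c*(c\ra a)\leq a.
\]
Thus $a$ is divisible by $c$, and $a=a*(c\ra a)$ also holds. By Lemma \ref{phi-p-q}, $\overline{a}\colon\{a\}\oto\{c\}$ is a morphism.

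I would take the following test data. Let $Y=Z=\sCd\{c\}$; both are separated and Cauchy complete. Let $B=\{x,y\}$ be the two-element $[0,1]_*$-set with both types equal to $c$ and $1_B^{\nat}(x,y)=a$; axioms \ref{S1}--\ref{S3} follow from the divisibility facts just noted. Assume $Z^Y$ exists, with evaluation $\ev$.

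\emph{Lower bound.} I would define a monotone $K\colon\sCd B\times Y\to Z$ using the description of $\sCd B$ in Example \ref{Cdq-x-y-exmp}. On the fibre over $\Fyd_Bx$, $K$ should act as the identity of $Y$. On the fibre over $\Fyd_By$, it should act as a non-identity endomap of $Y$, for instance $\overline{p}\mapsto \overline{p*a}$, or the map collapsing everything onto the level $a$. Set $k=\sE K$. Lemma \ref{Z^Y-hom} then gives
\[
a\leq 1_{Z^Y}^{\nat}\bigl(k(\Fyd_Bx),k(\Fyd_By)\bigr).
\]
A second choice of $K$, obtained by a symmetric configuration, gives a second lower bound on the same kind of hom.

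\emph{Upper bound.} Monotonicity of $\ev$ on $Z^Y\times Y$ gives
\[
1_{Z^Y}^{\nat}(f,g)\wedge 1_Y^{\nat}(\overline{c},\overline{c})\leq 1_Z^{\nat}\bigl(\ev(f,\overline{c}),\ev(g,\overline{c})\bigr).
\]
Evaluating at $\overline{c}$, the right-hand side is computed from \eqref{Cdp-hom}. It equals an expression such as $(c*a)\wedge\cdots$ or $c*(c\ra a)$, and it should be strictly below what the two lower bounds force. The intended contradiction is: the lower bounds push the relevant value up to $a$, or up to an element that would have to be idempotent and lie between $a$ and $c$, whereas the right-hand side is at most a product involving $c*c<c$. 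The maximality of $a$ among idempotents below $c$ (Lemma \ref{sup-idem}) is exactly what excludes any intermediate idempotent.

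\emph{Main obstacle.} I expect the hardest step to be choosing $K$ on all of $\sCd B\times Y$ so that it is genuinely monotone. Singletons of the form $1_B^{\nat}(z,-)\circ\overline{p}$, with $p$ below $c$, must be sent consistently, and in particular types must be preserved, as \eqref{monotone-function-def} requires. At the same time the evaluation bound must stay strict. My first attempt would use the formulas of Examples \ref{Cdq-exmp} and \ref{Cdq-x-y-exmp} and define $K$ on pairs of equal type $p$ by a case split on whether $p\leq a$. For $p\leq a$ both branches coincide, which makes gluing harmless, because $a$ is idempotent. For $p>a$ the two branches differ, and this is where non-idempotence of $c$ should surface in the hom computation.
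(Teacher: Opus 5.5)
\paragraph{Verdict: genuine gap.} Your sufficiency direction is fine, and so are the preliminaries: the existence of a non-idempotent $c$, the divisibility of $a=a_c$ by $c$, and the legitimacy of $B$. The necessity argument, however, cannot work with the test data $Y=Z=\sCd\{c\}$.

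\paragraph{Why $Y=Z=\sCd\{c\}$ fails.} By Lemma \ref{phi-p-q}, a morphism $\{p\}\oto\{c\}$ is unique when it exists (its value is $p$), so $\sCd\{c\}$ has at most one element of each type. Monotone functions preserve types, so every monotone $K\colon\sCd B\times Y\to Z$ is forced to be $(\lam,\overline{p})\mapsto\overline{p}$. There is therefore no ``non-identity'' fibre to choose. The maps you suggest, $\overline{p}\mapsto\overline{p*a}$ or the collapse onto level $a$, change types and are not morphisms of $\RCcSetM$.

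The configuration is in fact harmless globally. For every $A$, the set $\RCcSetM(A\times Z,Z)$ consists of the projection alone. Hence $Z^Z$ exists (it is the terminal object), and no arrangement of lower and upper bounds can produce a contradiction.

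Your lower bound is also too weak in principle. Since $1_B^{\nat}(x,y)=a$, Lemma \ref{Z^Y-hom} yields only $a\leq\cdots$, and nothing pushes a hom strictly above $a$. The upper-bound step (``a product involving $c*c<c$'') has no concrete mechanism behind it.

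\paragraph{The missing idea.} You need a codomain with two distinct points of the same type that become indistinguishable at idempotent levels below $c$, and these two points must be linked through a point of type $c$. The paper's construction is as follows.
\begin{enumerate}
\item Take $X=\{x,x'\}$ with both types $1$ and $1_X^{\nat}(x,x')=a_c$. Put $Z=\sCd X$ and $Y=\sCd\{1\}$; every point $\overline{p}_1$ of $Y$ has idempotent $p$.
\item Use the maps $F,G,H$: $F$ (on type $1$) and $G$ (on type $c$) send $\overline{p}_1$ to $1_X^{\nat}(x,-)\circ\overline{p}_1$, and $H$ sends it to $1_X^{\nat}(x',-)\circ\overline{p}_1$.
\item Use mixed-type test objects $B=\{b_0,b_1\}$ and $B'=\{b'_0,b'_1\}$, with types $1$ and $c$ and hom $c$. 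Via Lemma \ref{Z^Y-hom} these give $1_{Z^Y}^{\nat}(f,g)=c=1_{Z^Y}^{\nat}(g,h)$, where $g$ has type $c$.
\item The gluing in $K'$ is consistent precisely because any type occurring over $b'_0$ is an idempotent $\leq a_c$, and there $a_c*p=p$.
\item Axiom \ref{S3} then forces $c\leq1_{Z^Y}^{\nat}(f,h)$, while evaluation at $\overline{1}_1$ gives $1_{Z^Y}^{\nat}(f,h)\leq a_c<c$.
\end{enumerate}
Non-idempotence of $c$ enters only through $a_c<c$, not through any computation with $c*c$.
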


\begin{proof}
If $*$ is the minimum t-norm, then $\RSet$ is the topos of $[0,1]$-valued sets over the frame $[0,1]$ \cite{Hoehle1992}. It is therefore cartesian closed, and so is the equivalent category $\RCcSetM$ by Proposition \ref{RSet-RCcSetM}.

Conversely, suppose that $*$ is not the minimum t-norm. There is then a non-idempotent $c\in[0,1]$: indeed, if every element were idempotent, then $p*q=p\wedge q$ for all $p,q\in[0,1]$. By Lemma \ref{sup-idem}, $a_c<c$.

Let $X=\{x,x'\}$ be the $[0,1]_*$-set with
\[
1_X^{\nat}(x,x)=1_X^{\nat}(x',x')=1,\qquad
1_X^{\nat}(x,x')=a_c.
\]
Put $Y=\sCd\{1\}$ and $Z=\sCd X$. An element of $Y$ is of the form $\overline{p}_1\colon\{p\}\oto\{1\}$, where $p$ is idempotent. If $(\overline{p}_c,\overline{p}_1)$ belongs to $\sCd\{c\}\times Y$, then $p$ is an idempotent element below $c$, and hence $p\leq a_c$. Since $a_c$ and $p$ are idempotent, this also gives $a_c*p=p$.

For idempotent $p,q$, direct use of \eqref{CdX-hom} gives
\[
1_Z^{\nat}\left(1_X^{\nat}(x,-)\circ\overline{p}_1,1_X^{\nat}(x,-)\circ\overline{q}_1\right)=p*q
\]
and
\[
1_Z^{\nat}\left(1_X^{\nat}(x,-)\circ\overline{p}_1,1_X^{\nat}(x',-)\circ\overline{q}_1\right)=a_c*p*q.
\]
Thus the following maps are monotone:
\[
F\colon\sCd\{1\}\times Y\to Z,\qquad
F(\overline{p}_1,\overline{p}_1)=1_X^{\nat}(x,-)\circ\overline{p}_1,
\]
\[
G\colon\sCd\{c\}\times Y\to Z,\qquad
G(\overline{p}_c,\overline{p}_1)=1_X^{\nat}(x,-)\circ\overline{p}_1,
\]
and
\[
H\colon\sCd\{1\}\times Y\to Z,\qquad
H(\overline{p}_1,\overline{p}_1)=1_X^{\nat}(x',-)\circ\overline{p}_1.
\]
For $G$, the preceding observation and the displayed hom calculations show monotonicity by the product formula.

Suppose that $Z^Y$ exists. Since $Z^Y$ is Cauchy complete, the exponential adjunction and the Cauchy-completion reflection give natural bijections (cf. \cite[(4.vi)--(4.viii)]{Shen2026})
\[
Z^Y_q\cong\RCcSetM(\sCd\{q\}\times Y,Z)
\]
for $q\in[0,1]$. Let $f\in Z^Y_1$, $g\in Z^Y_c$, and $h\in Z^Y_1$ be the elements corresponding to $F$, $G$, and $H$, respectively.

Let $B=\{b_0,b_1\}$ be the $[0,1]_*$-set with
\[
1_B^{\nat}(b_0,b_0)=1,\qquad
1_B^{\nat}(b_1,b_1)=c,\qquad
1_B^{\nat}(b_0,b_1)=c.
\]
Define a map $K\colon\sCd B\times Y\to Z$ by
\[
K\left(1_B^{\nat}(b_i,-)\circ\lam,\overline{p}_1\right)
=1_X^{\nat}(x,-)\circ\overline{p}_1,
\]
where $\lam\colon\{p\}\oto\{1_B^{\nat}(b_i,b_i)\}$ is a morphism. By Example \ref{Cdq-x-y-exmp}, this defines $K$ on all of $\sCd B\times Y$. If a point has two such presentations, the resulting value of $K$ is the same for both presentations. Moreover, the hom in the source product is at most $p*q$, which is exactly the hom between the corresponding images in $Z$. Hence $K$ is monotone.

For $i=0,1$, the singleton $1_B^{\nat}(b_i,-)$ induces a monotone function
\[
j_i\colon\sCd\{1_B^{\nat}(b_i,b_i)\}\to\sCd B,\qquad
j_i(\lam)=1_B^{\nat}(b_i,-)\circ\lam.
\]
The restrictions of $K$ along $j_0\times1_Y$ and $j_1\times1_Y$ are $F$ and $G$, respectively. Under the natural bijections above, naturality with respect to $j_0$ and $j_1$ identifies the corresponding values of the transpose $\sE K$ with $f$ and $g$; explicitly,
\[
(\sE K)(\Fyd_B b_0)=f,\qquad (\sE K)(\Fyd_B b_1)=g.
\]
Lemma \ref{Z^Y-hom} therefore gives
\[
c\leq1_{Z^Y}^{\nat}(f,g).
\]
The reverse inequality follows from the types of $f$ and $g$, so
\[
1_{Z^Y}^{\nat}(f,g)=c.
\]

Similarly, let $B'=\{b'_0,b'_1\}$ be the $[0,1]_*$-set with
\[
1_{B'}^{\nat}(b'_0,b'_0)=c,\qquad
1_{B'}^{\nat}(b'_1,b'_1)=1,\qquad
1_{B'}^{\nat}(b'_0,b'_1)=c.
\]
Define $K'\colon\sCd B'\times Y\to Z$ by
\[
K'\left(1_{B'}^{\nat}(b'_0,-)\circ\lam,\overline{p}_1\right)
=1_X^{\nat}(x,-)\circ\overline{p}_1
\]
and
\[
K'\left(1_{B'}^{\nat}(b'_1,-)\circ\lam,\overline{p}_1\right)
=1_X^{\nat}(x',-)\circ\overline{p}_1.
\]
If a point admits both presentations used in the definition of $K'$, then its type is an idempotent below $c$, hence is at most $a_c$; the two prescriptions for $K'$ therefore give the same element of $Z$, because $a_c*p=p$. In a mixed case, the parameter of a singleton presented through $b'_0$ is likewise at most $a_c$; consequently the mixed hom in $Z$ is $a_c*p*q=p*q$. Since the hom in the source product is at most $p*q$, the map $K'$ is monotone.

For $i=0,1$, define
\[
j'_i\colon\sCd\{1_{B'}^{\nat}(b'_i,b'_i)\}\to\sCd B',\qquad
j'_i(\lam)=1_{B'}^{\nat}(b'_i,-)\circ\lam.
\]
The restrictions of $K'$ along $j'_0\times1_Y$ and $j'_1\times1_Y$ are $G$ and $H$, respectively. The same naturality argument gives
\[
(\sE K')(\Fyd_{B'} b'_0)=g,\qquad (\sE K')(\Fyd_{B'} b'_1)=h.
\]
Hence Lemma \ref{Z^Y-hom} gives
\[
c\leq1_{Z^Y}^{\nat}(g,h).
\]
The reverse inequality follows from the types of $g$ and $h$, so
\[
1_{Z^Y}^{\nat}(g,h)=c.
\]

Let $\overline{1}_1$ denote the type-$1$ element of $Y$. The monotonicity of evaluation gives
\[
1_{Z^Y}^{\nat}(f,h)\leq1_Z^{\nat}\left(\ev_Z(f,\overline{1}_1),\ev_Z(h,\overline{1}_1)\right).
\]
The defining property of the transposes and the displayed calculation of the hom in $Z$ identify the right-hand side with
\[
1_Z^{\nat}\left(1_X^{\nat}(x,-),1_X^{\nat}(x',-)\right)=a_c.
\]
On the other hand, $1_{Z^Y}^{\nat}(g,g)=c$, and axiom \ref{S3} applied to $f,g,h$ yields
\[
c*(c\ra c)\leq1_{Z^Y}^{\nat}(f,h).
\]
This is impossible because $c*(c\ra c)=c>a_c$. Hence $Z^Y$ does not exist, and $\RCcSetM$ is not cartesian closed.
\end{proof}

The main result follows at once from Propositions \ref{RSet-RCcSetM} and \ref{RCcSetM-cc}:

\begin{thm} \label{QSet-topos-frame}
The category $\RSet$ is cartesian closed if and only if $*$ is the minimum t-norm on $[0,1]$.
\end{thm}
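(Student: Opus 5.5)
The theorem follows by transporting Proposition \ref{RCcSetM-cc} along the equivalence of Proposition \ref{RSet-RCcSetM}. The one point to make explicit is that cartesian closedness is invariant under equivalence of categories. I will argue this directly rather than cite it. Let $E\colon\RSet\to\RCcSetM$ be an equivalence with quasi-inverse $E'$.

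First, finite products transfer. Since $E$ and $E'$ are both left and right adjoints of each other (an equivalence can be upgraded to an adjoint equivalence), each preserves all limits that exist. Hence $E'$ carries a terminal object and binary products of $\RCcSetM$ to a terminal object and binary products of $\RSet$, and conversely. In particular $E(A\times B)\cong EA\times EB$ naturally.

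Second, exponentials transfer. Suppose $\RCcSetM$ is cartesian closed. For $A,Y,Z\in\RSet$ define $Z^Y:=E'\bigl((EZ)^{EY}\bigr)$. The chain of natural bijections
\[
\RSet(A\times Y,Z)\cong\RCcSetM(E(A\times Y),EZ)\cong\RCcSetM(EA\times EY,EZ)\cong\RCcSetM(EA,(EZ)^{EY})\cong\RSet(A,E'((EZ)^{EY}))
\]
then shows that $-\times Y$ has a right adjoint. The last step uses the adjunction $E\dashv E'$. The converse direction is symmetric, with the roles of $E$ and $E'$ swapped. Thus $\RSet$ is cartesian closed if and only if $\RCcSetM$ is.

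Combining this with Proposition \ref{RCcSetM-cc}, which states that $\RCcSetM$ is cartesian closed exactly when $*$ is the minimum t-norm, gives the theorem. There is no real obstacle here, since all the substance lies in Proposition \ref{RCcSetM-cc} and in the equivalence of Proposition \ref{RSet-RCcSetM}. The only point needing care is naturality in $A$ of the displayed chain, and this is automatic because every bijection in it comes from a functor or an adjunction.
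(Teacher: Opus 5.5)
Your proposal is correct and follows the paper's route exactly: the paper deduces the theorem at once from the equivalence in Proposition~\ref{RSet-RCcSetM} and the characterization in Proposition~\ref{RCcSetM-cc}. The only difference is that you spell out the standard fact that finite products and exponentials transfer along an adjoint equivalence, which the paper leaves implicit (and also uses tacitly, in the other direction, in the minimum-t-norm half of Proposition~\ref{RCcSetM-cc}).
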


\begin{rem}

Theorem \ref{QSet-topos-frame} contrasts with the result of Lai and Luo \cite{Lai2026a}; see also the general quantaloid characterization of Stubbe and Yu \cite{Stubbe2026}. In $[0,1]_*\text{-}\Cat$, enrichment is global: all hom-values lie in the single quantale $[0,1]_*$, so there is no varying type data. Lai and Luo characterize the left-continuous t-norms for which $[0,1]_*\text{-}\Cat$ is cartesian closed; their classification includes t-norms different from the minimum t-norm.

By contrast, $\RSet$ has symmetric $\sD[0,1]_*$-categories with variable types as objects and left adjoint $\sD[0,1]_*$-distributors as morphisms. The two test objects $B$ and $B'$ in the proof of Proposition \ref{RCcSetM-cc} use the non-idempotent type $c$ alongside type $1$, a configuration unavailable in the global-enrichment setting of $[0,1]_*\text{-}\Cat$. Thus none of the non-minimum cases in the classification of Lai and Luo yields a cartesian closed $\RSet$. This contrast is not attributable to variable types alone: symmetry and the choice of left adjoint distributors are also part of the $\RSet$ framework.
\end{rem}

\begin{cor}
The category $\RSet$ is a topos if and only if $*$ is the minimum t-norm on $[0,1]$.
\end{cor}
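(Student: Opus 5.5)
The corollary is about the topos property, and it can be obtained from Theorem \ref{QSet-topos-frame} together with the frame case. I will prove the two directions separately.

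\emph{Sufficiency.} Suppose $*$ is the minimum t-norm. Then $([0,1],\wedge,1)$ is a frame, since binary meet distributes over arbitrary joins in a complete chain. In this situation $\RSet$ is exactly the category of $[0,1]$-valued sets in the sense of Higgs and Fourman--Scott. By \cite{Fourman1979} (see also \cite{Hoehle1992}), that category is a topos, equivalent to the topos of sheaves on the frame $[0,1]$. This is the fact already invoked at the start of the proof of Proposition \ref{RCcSetM-cc}.

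\emph{Necessity.} Suppose $\RSet$ is a topos. Every elementary topos has finite limits and power objects, and is cartesian closed; this is a standard fact (e.g.\ exponentials are built as subobjects of $\Omega^{Z\times Y}$-type power objects). Hence $\RSet$ is cartesian closed. By Theorem \ref{QSet-topos-frame}, $*$ must then be the minimum t-norm.

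There is no real obstacle, since all the work lies in Theorem \ref{QSet-topos-frame}. The only point that needs care is the identification in the sufficiency direction. When $*=\wedge$, the residual satisfies $p\ra q=1$ for $p\leq q$ and $p\ra q=q$ otherwise. Substituting this, the axioms \ref{S1}--\ref{S3} reduce to the usual frame-valued set axioms: symmetry, and $\al(x,y)\wedge\al(y,z)\leq\al(x,z)$. Likewise the morphisms (left adjoint distributors) reduce to the usual functional relations of Fourman--Scott. Thus the frame-valued result applies verbatim.
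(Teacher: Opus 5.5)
Your proof is correct and follows the paper's argument: sufficiency comes from the classical fact that frame-valued sets form a topos, and necessity comes from the fact that every topos is cartesian closed, so Theorem \ref{QSet-topos-frame} applies. Your extra check that, for $*=\wedge$, the axioms \ref{S1}--\ref{S3} and the left-adjoint conditions reduce to the Fourman--Scott ones is correct; it justifies an identification the paper takes for granted by citing \cite{Hoehle1992}.
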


\begin{proof}
If $*$ is the minimum t-norm, then $\RSet$ is a topos \cite{Hoehle1992}. Conversely, every topos is cartesian closed, so the conclusion follows from Theorem \ref{QSet-topos-frame}.
\end{proof}



\section*{Acknowledgements}

During the preparation of this manuscript, the authors used ChatGPT to assist with manuscript organization, language editing, mathematical consistency checking, and submission-readiness review. All AI-assisted suggestions were critically reviewed and, where appropriate, edited and verified by the authors, who remain fully responsible for the content of the manuscript.

\section*{Declarations}

\subsection*{Ethical approval}
Not applicable.

\subsection*{Competing interests}
The authors declare that they have no competing interests.

\subsection*{Authors' contributions}
All authors contributed equally.

\subsection*{Availability of data and materials}
Not applicable.

\subsection*{Funding}

The authors acknowledge support from the National Natural Science Foundation of China (Grant No. 12671561).

\end{document}